\newtheorem{bigthm}{Theorem}
\newtheorem{thm}{Theorem}[section]
\newtheorem{lem}[thm]{Lemma}
\newtheorem{cor}[thm]{Corollary}
\theoremstyle{definition}
\theoremstyle{definition}
\theoremstyle{definition}
\theoremstyle{definition}
\theoremstyle{definition}
\newtheorem{defn}[thm]{Definition}
\theoremstyle{definition}
\theoremstyle{definition}
\theoremstyle{definition}
\newtheorem{rem}[thm]{Remark}
\theoremstyle{definition}
\newcommand{\symdif}{\vartriangle}
\numberwithin{equation}{section}
\def\ind{{\bf 1}}
\def\Pa{\vspace{11pt}\noindent}
\def\R{\mathbb{R}}
\def\Z{\mathbb{Z}}
\def\B{\mathcal{B}}
\def\to{\rightarrow}
\title{Perturbation of Sparse Ergodic Averages}
\author{Andrew Parrish}
\begin{document}

\begin{abstract}
 We provide examples of a nested sequences of sets $\{ S_n \}$, suitably sparse, residing in a group $G$, for which the averages 
\begin{equation*}
 A(N,f) = \frac{1}{\#S_N}  \sum_{g \in S_N} f\left( T_g x \right)
\end{equation*}
fail converge pointwise for $f$ in certain $L^p$ spaces, but do converge in others, for any free group action $T$. Our construction involves the method of perturbation pioneered by A. Bellow and applied in the integer cases by K. Reinhold and M. Wierdl. 
\end{abstract}

\maketitle
\tableofcontents 

\section{Introduction}
Let $d$ be a natural number. In this paper we seek to show that there are sequences of sets $\left\{S_t \right\}$ in $\Z^d$ for which the averages 
\begin{equation*}
   A\left(S_t, f \right) = \frac{1}{\# S_t} \sum_{g \in S_t} f\left(T_gx \right)
\end{equation*}
converge pointwise for a.e. $x \in X$ for any free $\Z^d$-action, $(X, \B, m, T)$, and for any $f \in L^q$, but diverge for some $f$ in a larger Banach space, again for any free $\Z^d$-action.

\Pa
All measure-preserving systems (group actions) mentioned should be understood to be aperiodic (free); all measure spaces are of finite measure. $\#A$ denotes the cardinality of the discrete set $A$. 

\Pa
One of the interesting aspects of working in higher dimensions is that it becomes necessary to define in what order addition is to be performed. As it turns out, this is not a trivial process- adding elements as they appear in successive balls results in different convergence properties for subsequences compared to other nested F\o lner sequences. 

\Pa
We will be interested in only two sequences  in this regard: the sequence of balls of radius $N$, $\{B_N\}$, centered at the origin, and the sequence of cubes, $\{R_N\}$, likewise centered at the origin and of side length $2N+1$. While most of our proofs will remain true for any sequence of nested rectangular prisms in $\Z^d$, we adopt the cubes for the sake of simplicity.

\Pa
\begin{defn}
   Suppose $A$ is a set in $\Z^d$. Then 
\begin{equation*}
   A(N) := A \cap B_N \mbox{, and } A_N := A \cap R_N.
\end{equation*}
\end{defn}

\Pa
\begin{defn}
Suppose that $(X, \B, m, T)$ is a measure preserving system and  $p \geq 1$. We say that a sequence $\{a_n\}$ is \textit{pointwise $L^p$-good for $(X, \B, m, T)$}, if the sequence
\begin{equation*}
   A_N\left(a_n, f \right) = \frac{1}{N} \sum_{n=1}^N f\left(T^{a_n} x \right)
\end{equation*}
converges for a.e. $x \in X$ for all $f \in L^p$. A sequence is \textit{universally pointwise $L^p$-good}  if it is pointwise $L^p$ good for every aperiodic measure-preserving system. 

\Pa
Sequences that fail to be $L^p$-good for $(X, \B, m, T)$ we call \textit{ pointwise $L^p$-bad for $(X, \B, m,T)$}. A sequence that is pointwise $L^p$ bad for every aperiodic measure-preserving system is \textit{universally pointwise $L^p$-bad}.
\end{defn}

\Pa
\begin{defn}
We say that a sequence is \textit{universally $\infty$-sweeping out} for $L^p$, $p \geq 1$ if there is an $f \in L^p$ such that
\begin{equation*}
   \sup_N  A_N\left(a_n, f \right) = \infty.
\end{equation*}
\end{defn}
\Pa
The most familiar example of an $L^1$-good sequence is the natural numbers themselves. While no positive density sequence can be universally $L^1$-bad, the first example of a zero-density, universally pointwise $L^1$-good sequence was given in \cite{Bellow1984}. In \cite{Bellow1989Perturb}, Bellow constructs a universally $L^p$-good sequence that is universally $L^q$-bad for $1 \leq q < p$ and any $1 < p < \infty$.  Using similar methods, Reinhold (\cite{Reinhold1994}) showed that there is a sequence which is universally $L^p$-good for $p > q \geq 1$ but universally $L^q$-bad and constructed sequences which are $L^q$-bad for all $q < \infty$ but good in $L^\infty$. 

\Pa
The method employed in these last two results was perturbation- by changing slightly the sequence, the averages were forced to converge for some Banach spaces but not others. These changes provided additional structure to the sequences. This structure was then shown to allow divergence for certain functions. Further, albeit circumstantial, evidence for the link between structure and divergence can be found when we consider the recent results of \cite{Buczolich2010}, \cite{ChristPreprint}, and \cite{LPR}; while the sequence of squares, $\{n^2 \}$, is pointwise $L^1$-bad, we see that there are examples of sequences with zero Banach density that grow at the same rate and are universally $L^1$-good.

\Pa
In order to continue the discussion in higher dimensions, we must reframe our definitions regarding density and ``goodness'' and ``badness'' of sequences in terms of sequences of sets in $\Z^d$. 
\begin{defn}
   Suppose $D \subseteq \Z^d$. We say that $D$ is \textit{sparse} if
\begin{equation*}
   \lim_{N \to \infty}\frac{\# \left(D \cap B_N \right)}{\#B_N}=\lim_{N \to \infty} \frac{\# D(N)}{\#B_N} =0,
\end{equation*}
   and \textit{Banach density $0$} if 
\begin{equation*}
   \lim_{N \to \infty} \sup_{g \in \Z^d} \frac{\# \left(g+F \cap B_N \right)}{ \#B_N}=0.
\end{equation*}
\end{defn}

\begin{defn}
Suppose that $(X, \B, m, T)$ is a free $\Z^d$-action and $p \geq 1$. We say that a sequence of sets $\{S_n\}$ is \textit{pointwise $L^p$-good for $(X, \B, m, T)$}, if the sequence
\begin{equation*}
   A\left(S_n, f \right) = \frac{1}{\# S_n} \sum_{g \in S_n} f\left(T_g x \right)
\end{equation*}
converges (as $n \to \infty$) for a.e. $x \in X$ for all $f \in L^p$. A sequence is \textit{universally pointwise $L^p$-good}  if it is pointwise $L^p$ good for every free $\Z^d$-action.

\Pa
A sequence of sets $\{S_n\}$ is \textit{universally $\infty$-sweeping out} for $L^p$, $p \geq 1$ if there is an $f \in L^p$ such that
\begin{equation*}
   \sup_n  A\left(S_n, f \right)  = \infty.
\end{equation*} 
\end{defn}

\Pa
Finally, we also require a definition of perturbation in terms of these sequences.
\begin{defn}
 Let $\{D_n \}$ be a sequence of sets. The sequence $\{S_n \}$ is a perturbation of $\{D_n \}$ if
\begin{equation*}
 \lim_{n \to \infty} \frac{\# \left( S_n \symdif D_n \right)}{\# D_n} =0,
\end{equation*}
\end{defn}

\Pa
We may now state our main results: 
\begin{bigthm}\label{One}
   Let $1 \leq q < \infty$, $D$ be sparse in $\Z^d$, and suppose that $\left\{ D_N \right\}$ is a universally good sequence of sets for $L^p$ for every $p>q$. Then there is a perturbation of $\left\{ D_N \right\}$,  $\left\{ S_N \right\}$, so that $\left\{ S_N \right\}$ is universally good for each $p > q$, but is $\infty$-sweeping out for $L^q$. 
\end{bigthm}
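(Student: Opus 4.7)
The plan is to construct $S_N$ by attaching to $D_N$ a small but highly structured perturbation $P_N$, with $\#P_N/\#D_N \to 0$, chosen so that the associated maximal operator fails precisely at the endpoint $L^q$. By a transference argument through Rokhlin towers -- available in any aperiodic $\Z^d$-action because $\{D_N\}$ is sparse, so the relevant finite configurations fit inside towers of arbitrarily small base measure -- the problem reduces to designing $\{P_N\}$ combinatorially in $\Z^d$ and exhibiting a single bad function on a suitable tower model.

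Following the Bellow-Reinhold paradigm, I would choose a rapidly lacunary sequence of scales $\{N_k\}$ and on each annular region $R_{N_k} \setminus R_{N_{k-1}}$ insert $P_{N_k}$ as a union of translates of a ``test block'' $Q_k$ (say a small ball of radius $r_k$), with total size $\#P_{N_k} = \alpha_k \#D_{N_k}$ for some $\alpha_k \searrow 0$. For $N$ between $N_k$ and $N_{k+1}$, $S_N$ is extended to maintain nesting and the perturbation ratio. The bad function will be transferred from a function of the form $f = \sum_k \beta_k \ind_{F_k}$ on a Rokhlin tower model, where $F_k$ is a disjoint union of tower levels positioned so that for $g$ in each translate composing $P_{N_k}$, one has $T_g x \in F_k$ for a positive proportion of $x$. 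Choosing the parameters so that $\sum_k \beta_k^q \, m(F_k) < \infty$ while $\beta_k \alpha_k \to \infty$ on a set of positive measure yields $f \in L^q$ with $\sup_N A(S_N, f) = \infty$.

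To preserve $L^p$-goodness for every $p > q$, I would estimate
\begin{equation*}
   \bigl| A(S_N, f) - A(D_N, f) \bigr| \leq \frac{\#P_N}{\#S_N} \, A(P_N, |f|) + \frac{\#(D_N \setminus S_N)}{\#S_N} \, A(D_N, |f|).
\end{equation*}
The second summand vanishes almost everywhere because $\{D_N\}$ is $L^p$-good and $\#(D_N \setminus S_N)/\#S_N \to 0$. For the first summand, since each $P_N$ is a union of translates of balls, the operator $M_P f := \sup_N A(P_N, |f|)$ is dominated by a Hardy-Littlewood-type maximal operator and so satisfies a strong $(p,p)$ bound for every $p > 1$; combined with $\#P_N/\#S_N \to 0$, this forces the first summand to $0$ in $L^p$-norm, hence pointwise along a subsequence, and a standard Banach principle argument extends this to a.e.\ convergence for all $f \in L^p$.

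The main obstacle is the simultaneous calibration of $\alpha_k$ and $\beta_k$: the perturbation must be large enough, and the bad function singular enough at the $L^q$ scale, to drive $\sup_k A(S_{N_k}, f) = \infty$ on a set of positive measure; yet $\alpha_k$ must be small enough that both the perturbation property and the strong-type $(p,p)$ bound for $M_P$ persist across \emph{every} $p > q$ simultaneously. This amounts to exploiting the failure of weak $(q,q)$ for the structured operator $M_P$ while retaining strong $(p,p)$ for $p>q$. Making this work \emph{uniformly} for an arbitrary sparse $\{D_N\}$ -- rather than a single hand-picked example -- is the technical heart of the construction, and will require choosing the positions of the translates making up $P_N$ in compatibility with the geometry forced on us by $D_N$, probably via a greedy placement argument inside the cubes $R_{N_k}$.
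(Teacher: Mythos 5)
Your overall architecture (add a small structured set $P_N$ to $D_N$, transfer a combinatorial counterexample into an arbitrary free action via Rokhlin towers) matches the paper's, but the convergence half of your argument contains a genuine gap that is also internally inconsistent. You claim that because $P_N$ is a union of translates of small balls, $M_P f = \sup_N A(P_N,|f|)$ is dominated by a Hardy--Littlewood-type maximal operator and hence strong $(p,p)$ for \emph{every} $p>1$. First, this domination is false in general: an average over widely separated translates of small balls centered at points $c_i$ is controlled by an average of $M_{HL}f$ at the points $T_{c_i}x$, not by $M_{HL}f(x)$ itself, so no configuration-independent $(p,p)$ bound follows. Second, if it were true it would prove too much: the same estimate applied at $p=q>1$ would force $\frac{\#P_N}{\#S_N}A(P_N,|f|)\to 0$ a.e.\ for all $f\in L^q$, destroying the divergence you need at the endpoint. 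The actual mechanism has to discriminate between $p>q$ and $p=q$, and in the paper this is done not by a maximal inequality but by a crude triangle-inequality plus summability argument: the added mass at scale $n_k$ is calibrated to $(u/2^u)^{1/q}\,\#D_{n_k}$ for $k\in A_u=\{2^u,\dots,2^{u+1}-1\}$, so that
\begin{equation*}
   \sum_{k}\Bigl\|\tfrac{1}{\#D_{n_k}}\textstyle\sum_{g\in S_{2n_k}\setminus D_{2n_k}}f\circ T_g\Bigr\|_{L^p}^p
   \;\lesssim\; \|f\|_{L^p}^p\sum_u 2^u\bigl(u/2^u\bigr)^{p/q},
\end{equation*}
which converges precisely when $p>q$ and gives a.e.\ convergence of the error term to $0$; your ``convergence in $L^p$-norm, hence pointwise along a subsequence, plus Banach principle'' does not yield a.e.\ convergence of the full sequence of error terms without some such summability.

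On the divergence half, your sketch is in the right spirit but underspecified at the point that matters: with one test block per scale and ``$T_gx\in F_k$ for a positive proportion of $x$,'' you only get $\sup_N A(S_N,f)=\infty$ on a set of positive measure, whereas $\infty$-sweeping out requires (and the paper's transference lemma delivers) divergence on a set of measure arbitrarily close to $1$, hence full measure after taking suprema. The paper achieves this by distributing the added elements over shells indexed by all residues $k \bmod 2^u$ for $k\in A_u$, and taking the maximum over $k\in A_u$ against a function supported on the lattice $2^u\Z^d$ with value $2^{u/q}$; the $2^u$ congruence classes together cover essentially all of $R_L$, which is what pushes the measure of the bad set to $1$. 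You would need either this covering device or a separate upgrading argument (\`a la Sawyer) to pass from positive measure to full measure.
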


\begin{bigthm}\label{Two}
   Let $1 < q < \infty$ and suppose that $\left\{ D_N \right\}$ is universally good for $f \in L^q$. Then there is a perturbation of $\left\{ D_N \right\}$,  $\left\{ S_N \right\}$, so that  $\left\{ S_N \right\}$ remains universally good for $q$, but is $\infty$-sweeping out for any $p< q$. 
\end{bigthm}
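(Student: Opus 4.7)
The plan is to adapt the perturbation technique of Bellow--Reinhold--Wierdl to the $\Z^d$ setting. I choose a rapidly increasing sequence of scales $N_1 < N_2 < \cdots$. At each $N_k$, I splice into $D_{N_k}$ a structured block $P_k \subset R_{N_k}$ (for instance a translated cube of side $2M_k+1$ in the complement of $D_{N_k}$, possibly after deleting a negligible subset of $D_{N_k}$ to make room), and set $S_{N_k} := D_{N_k} \cup P_k$ and $S_N := D_N$ otherwise. Writing $\alpha_k := \#P_k/\#S_{N_k}$, the parameters $M_k$ and $N_k$ are fixed so that $\alpha_k \to 0$; this guarantees $\{S_N\}$ is a genuine perturbation of $\{D_N\}$.

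\Pa
For the preservation of $L^q$-goodness, I decompose
\[
A(S_{N_k}, f) = (1 - \alpha_k) A(D_{N_k}, f) + \alpha_k A(P_k, f).
\]
The first term converges a.e.\ by hypothesis on $\{D_N\}$. The second is dominated by $\alpha_k M^{\ast} f$, where $M^{\ast}f(x) := \sup_{v,M} A(v + R_M, |f|)(x)$ is the translated cube maximal function; since $q > 1$, $M^{\ast} f$ is finite a.e.\ by the Hardy--Littlewood maximal inequality for $\Z^d$-actions, so $\alpha_k A(P_k, f) \to 0$ a.e. Combined with $S_N = D_N$ for $N \notin \{N_k\}$, this yields a.e.\ convergence of $A(S_N, f)$ for every $f \in L^q$.

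\Pa
For $L^p$-sweeping out, $p < q$, I fix a free $\Z^d$-action $(X, \B, m, T)$ and apply Rokhlin's lemma to the F\o lner set $P_k$: find $B_k \subset X$ with $\{T_g B_k : g \in P_k\}$ pairwise disjoint and union $E_k$ of measure at least $1/2$. A cube-autocorrelation estimate (namely $\#((g + P_k) \cap P_k) \geq 2^{-d} \#P_k$ for $g$ in the concentric half-cube of $P_k$) produces a sub-tower $E_k^\circ \subseteq E_k$ of measure $\geq 2^{-d-1}$ on which $A(S_{N_k}, \ind_{E_k})(x) \geq 2^{-d} \alpha_k$. The candidate bad function is $f := \sum_k c_k \ind_{F_k}$, with $F_k \subseteq E_k^\circ$ and multipliers $c_k$ chosen so that $c_k \alpha_k \to \infty$, $\sum_k c_k^p m(F_k) < \infty$, and $m(\limsup_k F_k) > 0$. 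On $F_k$ one has $A(S_{N_k}, f)(x) \geq c_k \alpha_k / 2^d$, so on the limsup the supremum over $N$ is infinite.

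\Pa
The main obstacle is the simultaneous satisfaction of these three constraints: $\|f\|_p < \infty$ tends to force $m(F_k) \to 0$ rapidly, which precludes a direct Borel--Cantelli conclusion for $m(\limsup_k F_k) > 0$. The gap $q - p > 0$ supplies the numerical slack in one direction, permitting a scaling such as $c_k \sim \alpha_k^{-1-\epsilon_k}$ with $\epsilon_k \downarrow 0$ and $m(F_k) \sim c_k^{-p}$ that places $f$ in $L^p \setminus L^q$ (the latter being consistent with the $L^q$-goodness already established). The accumulation $m(\limsup_k F_k) > 0$ is forced by exploiting the freedom in Rokhlin's lemma to position the base sets $B_k$ over a shared positive-measure ``core'' across all scales, so the sub-towers $F_k$ overlap enough to accumulate despite shrinking individually. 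A cleaner alternative, sidestepping the delicate coordination of Rokhlin towers across scales, is a Conze-type transference argument that reduces the sweeping-out claim to the shift action on $(\Z^d, +)$, where failure of the weak-$(p, p)$ maximal inequality for $\{S_N\}$ can be verified directly from the combinatorial structure of the $P_k$'s.
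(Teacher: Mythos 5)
There are genuine gaps in both halves. For the $L^q$-goodness part, your domination $\alpha_k A(P_k,f)\le \alpha_k M^{\ast}f$ with $M^{\ast}f(x)=\sup_{v,M}A(v+R_M,|f|)(x)$ does not work: because the supremum runs over \emph{all} translates $v$, already the $M=0$ case gives $\sup_v |f|(T_v x)$, which is a.e.\ infinite for any unbounded $f\in L^q$ on an aperiodic system. The Hardy--Littlewood inequality controls the centered (or boundedly uncentered) maximal function, not a supremum over translates that are far from the origin relative to the cube's side length --- and your blocks $P_k$ sit at distance up to $N_k$ from the origin while having side $2M_k+1\ll N_k$. The paper avoids any maximal function here: it bounds $\left\|\frac{1}{\#D_{n_k}}\sum_{g\in S_{2n_k}\setminus D_{2n_k}}f\circ T_g\right\|_{L^q}^q\le \left(\#(S_{2n_k}\setminus D_{2n_k})/\#D_{n_k}\right)^q\|f\|_q^q$ and chooses the perturbation sizes $(u^{-2}2^{-u})^{1/q}\#D_{n_k}$ so that the resulting series $\sum_u 2^u\cdot 3^{dq}u^{-2}2^{-u}$ converges. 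Mere $\alpha_k\to 0$ is not enough; you need a summability condition of this kind (and you also need to track the \emph{accumulated} blocks $P_1\cup\cdots\cup P_k$ inside $S_N$ for $N$ between your chosen scales, since $S_N=S\cap R_N$ contains all earlier blocks --- the paper's condition (3.4) exists precisely for this).

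The sweeping-out half has a more structural problem. First, your autocorrelation estimate gives $A(S_{N_k},\ind_{E_k})\ge 2^{-d}\alpha_k$ for the indicator of the \emph{whole} tower $E_k$; it does not transfer to $\ind_{F_k}$ when $F_k$ is a small-measure subset, since $\#\{g\in P_k: T_gx\in F_k\}$ can then be far smaller than $2^{-d}\#P_k$. More fundamentally, taking $P_k$ to be a solid translated cube is self-defeating: averages over cubes satisfy the full weak $(1,1)$ maximal inequality, so no $L^p$ function can make $\alpha_k A(P_k,\cdot)$ large relative to its $L^p$ size, and the constraints $c_k\alpha_k\to\infty$, $\sum_k c_k^p m(F_k)<\infty$, $m(\limsup F_k)>0$ cannot all be met with level-set functions on such towers. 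The paper's added set $E_k$ is instead scattered across shells $I_m$ with $m\equiv k\ (\mathrm{mod}\ 2^u)$; this arithmetic structure resonates with the function supported on the sublattice $(2^u\Z)^d$ of height $2^{u/p}$, which has $L^p$-density at most $1$ yet produces averages of size $2^{\gamma u}/(u^{2/q}d^2 3^d)$ with $\gamma=(q-p)/(pq)>0$ on \emph{all} of $R_L$. That is where the gap $q-p$ actually enters --- not through Borel--Cantelli bookkeeping of tower measures. Your closing suggestion of a transference to the shift on $\Z^d$ is indeed the paper's route (its Lemma 2.1), but the combinatorial core --- the congruence-class placement of the perturbation and the resonant lattice function --- is exactly the part your proposal leaves unconstructed.
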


\begin{cor}\label{Balls}
   The same results hold for $\left\{ D(N) \right\}$ and $\left\{ S(N) \right\}$ if there are constants $c, C,$ and $C'$ so that 
\begin{align}
   c < \frac{\# D(N)}{ \#D_N} < C \mbox{, and} \label{comparable}\\
   \# D(2N) < C' \# D(N). \label{growth}
\end{align}
\end{cor}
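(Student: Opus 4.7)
My plan is to obtain the ball perturbation $\{S(N)\}$ by restricting the cube perturbation $\{S_N\}$ supplied by Theorem \ref{One} or \ref{Two} to the ball sequence, and then to leverage (\ref{comparable}) and (\ref{growth}) as geometric substitutes for the self-similarity of cubes used in the cube-based proofs. Concretely, writing $E_N := S_N \symdif D_N$ (so that $\#E_N = o(\#D_N)$), I would define
\[
S(N) := D(N) \symdif (E_N \cap B_N),
\]
which coincides with $S_N \cap B_N$ by the distributivity of symmetric difference over intersection. The perturbation property with respect to $\{D(N)\}$ is then immediate from (\ref{comparable}):
\[
\frac{\#(S(N) \symdif D(N))}{\#D(N)} \;\leq\; \frac{1}{c}\cdot\frac{\#E_N}{\#D_N} \;\longrightarrow\; 0.
\]
Furthermore, (\ref{comparable}) combined with iterated (\ref{growth}) forces $\#S(N),\,\#S_N,\,\#S(\lceil \sqrt{d}N\rceil)$ to be mutually comparable up to uniform constants, which underlies everything below.

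\noindent To transfer goodness and $\infty$-sweeping out from $\{S_N\}$ to $\{S(N)\}$, I would exploit the inclusion chain $B_N \subseteq R_N \subseteq B_{\lceil \sqrt{d}N\rceil}$, which yields $S(N) \subseteq S_N$ and $S_N \subseteq S(\lceil\sqrt{d} N\rceil) \cup (E_N \cup E_{\lceil\sqrt{d} N\rceil})$. For nonnegative $f$ this gives an approximate sandwich
\[
\frac{\#S(N)}{\#S_N}\, A(S(N), f) \;\leq\; A(S_N, f) \;\leq\; \frac{\#S(\lceil\sqrt{d}N\rceil)}{\#S_N}\, A(S(\lceil\sqrt{d}N\rceil), f) \;+\; \varepsilon_N,
\]
with both weights uniformly bounded away from $0$ and $\infty$, and with error $\varepsilon_N$ controlled by averages over the vanishing-density sets $E_N$ and $E_{\lceil\sqrt{d} N\rceil}$. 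Good behavior of $\{S_N\}$ in $L^p$ then pins $\liminf_N A(S(N), f)$ and $\limsup_N A(S(N), f)$ between constant multiples of the cube limit, and a Wiener-type doubling argument (whose main ingredient is (\ref{growth})) upgrades this to genuine a.e.\ convergence of $A(S(N), f)$. For the $\infty$-sweeping-out half of the dichotomy, the upper half of the sandwich immediately forces $\sup_M A(S(M), f)(x) = \infty$ whenever $\sup_N A(S_N, f)(x) = \infty$, so the same witness function $f \in L^q$ works for $\{S(N)\}$ as for $\{S_N\}$.

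\noindent\textbf{Main obstacle.} The principal technical difficulty is that the sequences $\{S_N\}$ and $\{S(M)\}_M$ are not literally nested in their index, so the sandwich is only approximate: one has genuine containment only modulo the perturbation sets $E_N, E_{\lceil\sqrt{d} N\rceil}$ and modulo the annular region $R_N \setminus B_N$. Keeping the error $\varepsilon_N$ small enough to be absorbed into the limit requires both that the perturbations have vanishing density (which is built into their construction) and that the scales $N$ and $\lceil\sqrt{d} N\rceil$ are related by a bounded multiplicative factor on $\#D(\cdot)$, which is precisely (\ref{growth}). The bookkeeping to verify that all constants remain uniform across infinitely many scales — and to handle general $f \in L^p$ via the usual $f^+ - f^-$ decomposition — is routine but is where I expect the proof to demand care.
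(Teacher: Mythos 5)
Your definition of $S(N)$ as $S\cap B_N$ and your verification that it perturbs $\{D(N)\}$ via (\ref{comparable}) are fine, and your transfer of the $\infty$-sweeping-out half is sound: for the (nonnegative) witness function, $S_{2n_k}\subseteq S(\lceil 2\sqrt{d}\,n_k\rceil)$ together with iterated (\ref{growth}) bounds the cardinality ratio uniformly, so unboundedness of the cube maximal function forces unboundedness of the ball maximal function. The genuine gap is in the convergence half. The sandwich $S(N)\subseteq S_N\subseteq S(\lceil\sqrt{d}N\rceil)$ only traps $\liminf_N A(S(N),f)$ and $\limsup_N A(S(N),f)$ between two \emph{different} constant multiples of the cube limit; it does not make them equal. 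The obstruction is that $S_N\setminus S(N)=S\cap(R_N\setminus B_N)$ need not be small relative to $\#S_N$ — (\ref{comparable}) only guarantees $c<\#D(N)/\#D_N\le 1$ — so the cube average is a nondegenerate convex combination of the ball average and the "corner" average, and convergence of the combination says nothing about either piece. The invoked "Wiener-type doubling argument" cannot close this: doubling arguments yield maximal inequalities, and upgrading to a.e.\ convergence then requires convergence on a dense subclass of the \emph{ball-ordered} averages, which is precisely what is missing. Indeed the paper's introduction explicitly warns that ball-ordered and cube-ordered averages of sparse sets can have different convergence properties, so no general cube-to-ball convergence transfer is available.

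The paper's (terse) route avoids $\{S_N\}$ entirely for the convergence half: since the corollary's hypothesis is that the \emph{ball} sequence $\{D(N)\}$ is universally good (another mismatch with your plan, which invokes Theorem \ref{One}'s conclusion for cubes and hence its hypothesis on $\{D_N\}$), one writes
\begin{equation*}
   \frac{1}{\#S(N)}\sum_{g\in S(N)}f\left(T_gx\right)
   = \frac{\#D(N)}{\#S(N)}\left(\frac{1}{\#D(N)}\sum_{g\in D(N)}f\left(T_gx\right)
   + \frac{1}{\#D(N)}\sum_{g\in S(N)\setminus D(N)}f\left(T_gx\right)\right).
\end{equation*}
The prefactor tends to $1$ because $\{S(N)\}$ perturbs $\{D(N)\}$; the first summand converges by hypothesis; and the second is annihilated by repeating the paper's $\ell^p$-summability estimate verbatim, using $S(N)\setminus D(N)\subseteq S_{2n_k}\setminus D_{2n_k}$ and $\#D(N)\ge c\,\#D_{n_k}$ from (\ref{comparable}), with (\ref{growth}) controlling the dyadic blocks. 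You should restructure the convergence argument along these lines rather than attempting to deduce ball convergence from cube convergence.
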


\section{A Rohlin Transferrence Lemma}
\begin{lem}\label{Rohlin}
   Suppose $S$ is sparse in $\Z^d$ and let $\left\{ R_N \right\}$ be a the sequence of cubes in $\Z^d$ of side length $2N+1$ centered at the origin. Let $S_N = S \cap R_N$. \\
   If, for every positive $K$ and $\varepsilon < 1$ there is an $f: \Z^d \to \R$ and a finite set $\Lambda$ such that 
\begin{align}
   &\limsup_{N \to \infty} \frac{1}{\# R_N} \sum_{n \in R_N} \left| \Phi \left( f(n) \right) \right| \leq 1, \mbox{ and}\\
   &\limsup_{N \to \infty} \frac{\# \left\{ j : \max_{N \in \Lambda} \frac{1}{\# S_N} \sum_{n \in S_N} \left|f (j+n) \right| >K\right\}}{\# R_N} \geq 1- \varepsilon, \label{transassum}
\end{align}
then $\left\{ S_N \right\}$ is a universally $\infty$-sweeping out sequence of sets for $\Phi(L)$.
\end{lem}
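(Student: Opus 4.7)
The lemma is a Calderón/Rohlin-type transference: the hypothesis is a purely combinatorial ``sweeping'' statement on $\Z^d$, and the conclusion is that the same phenomenon occurs inside every free $\Z^d$-action. My plan is to fix $K,\varepsilon$, obtain the combinatorial witness $f$ and finite set $\Lambda$ from the hypothesis, build a Rohlin tower in the abstract system whose base cube is much larger than $\max_{N\in\Lambda} N$, pull $f$ back to a function $F$ on $X$ via the tower, and then verify (a) $F$ lies in the unit ball of $\Phi(L)$ and (b) the maximal average of $F$ along $\{S_N\}$ exceeds $K$ on a set of measure at least $1-\varepsilon-o(1)$. An abstract Banach principle then upgrades this to a single function in $\Phi(L)$ witnessing $\infty$-sweeping out.

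First I would invoke the Rohlin lemma for free $\Z^d$-actions: given any large $M$ and any $\delta>0$, there is a measurable $B\subseteq X$ such that the translates $\{T_g B : g\in R_M\}$ are pairwise disjoint and their union has measure at least $1-\delta$. Choose $M$ much larger than $\max_{N\in\Lambda}N$ and such that both combinatorial $\limsup$'s in the hypothesis are essentially realized at scale $M$. Define
\begin{equation*}
   F(x)=\begin{cases} f(g) & \text{if } x=T_g b \text{ for some } g\in R_M,\ b\in B,\\ 0 & \text{otherwise.}\end{cases}
\end{equation*}
Because the tower translates are disjoint and each has measure $m(B)\approx (1-\delta)/\#R_M$, we get $\int\Phi(F)\,dm = m(B)\sum_{g\in R_M}\Phi(f(g))$, which by hypothesis (2.2) is at most $1+o(1)$; after an innocuous normalization $F$ lies in the unit ball of $\Phi(L)$.

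Next I would exploit that for $x=T_j b$ with $b\in B$ and $j$ lying in the inner core $R_{M-L}$ (where $L=\max_{N\in\Lambda}\max_{n\in S_N}\|n\|_\infty$, a finite quantity because $\Lambda$ is finite), every translate $j+n$ with $n\in S_N$, $N\in\Lambda$ still lies in $R_M$, so
\begin{equation*}
   A(S_N,F)(x)=\frac{1}{\#S_N}\sum_{n\in S_N} f(j+n).
\end{equation*}
Hypothesis \eqref{transassum} then says that the set of $j$ in the core for which $\max_{N\in\Lambda}A(S_N,F)(T_j b)>K$ has cardinality at least $(1-\varepsilon)\#R_M$, up to the boundary-layer correction (which is $o(\#R_M)$ because $L$ is fixed and $M\to\infty$). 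Summing over $b\in B$ and using disjointness of the tower then gives $m\bigl(\{\sup_N A(S_N,F)>K\}\bigr)\ge (1-\varepsilon)(1-\delta)-o(1)$.

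Finally, because this construction is available for every $K$ and every $\varepsilon$, I would pass from ``sweeping above any fixed $K$ on almost all of $X$'' to a single $f\in\Phi(L)$ with $\sup_N A(S_N,f)=\infty$ via the standard Banach principle (Sawyer/Stein): the failure of a weak-type bound for the maximal operator on the unit ball of $\Phi(L)$, which is exactly what the two displays above establish, forces the existence of a single function in $\Phi(L)$ whose maximal average is infinite on a positive measure set, and then a density/ergodicity argument (or a further diagonal summation $\sum_k 2^{-k}F_{K_k}$ with $K_k\uparrow\infty$ chosen sufficiently lacunary) gives the $\infty$-sweeping conclusion. The main obstacle I expect is bookkeeping the boundary effects of the Rohlin tower together with the inner-core restriction $R_{M-L}$, ensuring that the loss incurred by requiring all translates $j+n$ to remain inside $R_M$ is absorbed into the $o(1)$ as $M\to\infty$; the rest is combinatorics inherited directly from the hypothesis plus a standard application of the Banach principle.
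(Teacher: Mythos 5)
Your transference argument is essentially the paper's: a $\Z^d$ Rohlin tower of side much larger than the data in $\Lambda$, the transfer function $F=\sum_{\vec g} f(\vec g)\ind_{T_{\vec g}B}$, restriction of $j$ to an inner core $R_{M-L}$ so the averages do not fall off the tower, and the resulting measure estimate $(1-\varepsilon)(1-\delta)-o(1)$ all match the paper step for step. The only place you diverge is the final aggregation, and it is the one step where your sketch is materially vaguer than what is needed: a Sawyer--Stein Banach principle is not off-the-shelf for a general Orlicz class $\Phi(L)$, and the weighted sum $\sum_k 2^{-k}F_{K_k}$ does not obviously have finite modular $\int\Phi$ without convexity or a $\Delta_2$-type assumption on $\Phi$. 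The paper instead makes the aggregation explicit in a way that uses only $\Phi(0)=0$ and the smallness of $\Phi$ near $0$: for each $\alpha$ it rescales the witness to $\bar g_\alpha=|\bar f|/M_\alpha$ with $\int_X\Phi(\bar g_\alpha)\,dm\le 2^{-\alpha}$, compensates by running the construction at threshold $K=\alpha M_\alpha$ (legitimate since the hypothesis holds for every $K$) and at $\epsilon=1/(3\alpha)$, and then takes the pointwise supremum $\bar g=\sup_\alpha\bar g_\alpha$, bounding $\int_X\Phi(\bar g)\,dm\le\sum_\alpha\int_X\Phi(\bar g_\alpha)\,dm\le 1$; a Borel--Cantelli-style intersection then gives $\sup_N$ of the averages equal to $\infty$ on a set of full measure, not merely positive measure. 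If you replace your appeal to the Banach principle with this rescale-and-supremum device, your proof coincides with the paper's.
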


\begin{proof}
   The proof of the lemma rests entirely on the Rohlin Lemma for $\Z^d$. For this, we refer the reader to \cite{Conze1972} and \cite{Katznelson1972}. 

  \Pa 
  We will restrict our attention throughout to positive-valued functions. The assumption of the lemma, (\ref{transassum}), is equvalent to 
\begin{equation*}
   \limsup_{t \rightarrow \infty} \frac{\#\left\{ \vec{j} \in R_t : \max_{N \in \Lambda} \frac{1}{\# S_N} \sum_{\vec{n} \in S_N} f\left( \vec{j} + \vec{n} \right) \geq K \right\}}{ \#R_t} \geq 1-\epsilon,
\end{equation*}
which is to say that there are infinitely many $t$ so that 
\begin{equation*}
   \frac{\#\left\{ \vec{j} \in R_t : \max_{N \in \Lambda} \frac{1}{\# S_N} \sum_{\vec{n} \in S_N} f\left( \vec{j} + \vec{n} \right) \geq K \right\}}{ \#R_t} \geq 1-\epsilon.
\end{equation*}
Applying the $\Z^d$ Rohlin Lemma, we will construct a collection of $d$ Rohlin towers, each of height $2t+1$. What we want to do is make sure that our averages don't run off the top of our towers- there needs to be enough space at the top to accommodate every element of $S_N$.  

\Pa
Let $\delta = \max_{\vec{n} \in S_N, 1 \leq i \leq d} \left| n_i \right|$, and define 
\begin{equation*}
   R_{t, \delta} = R_t \setminus \left\{ \vec{n} : |n_i| \in [t- \delta, t], 1\leq i \leq d \right\}.
\end{equation*}
This leaves enough space at the edges of $R_t$ to acommodate $S_N$ and removes only $(2t+1)^d - (2t+1 - 2\delta)^d$ elements. 

\Pa
Fix $0< \varepsilon < \epsilon$. We choose $2t+1$ large enough so that 
\begin{align*}
   \frac{(2t+1)^d - (2t+1 - 2\delta)^d}{(2t+1)^d} &< \varepsilon \mbox{, and}\\
   \frac{1-\epsilon}{\#R_t} \sum_{\vec{j} \in R_t } \Phi \left( f (\vec{j}) \right) &\leq D\left( \Phi( f) \right).
\end{align*}

By the first condition, then, we have that
\begin{equation*}
   \frac{\#\left\{ \vec{j} \in R_{t, \delta} : \max_{N \in \Lambda} \frac{1}{\# S_N} \sum_{\vec{n} \in S_N} f\left( \vec{j} + \vec{n} \right) \geq K \right\}}{ \#R_t} \geq 1-2\epsilon.
\end{equation*}

\Pa
Let $\left(X, \mathcal{B}, m, T \right)$ be a probability space with an associated free $\Z^d$-action, $T$. We form a tower complex with the dimensions $R_t$- that is, with $2t+1$ levels in each coordinate. We denote each level by $E_{\vec{m}} = T_{\vec{m}} B$, where $B$ is the base of the towers, and the error set by $E_r$, with measure less than $\varepsilon$. 

\Pa
Define $\bar{f}: X \rightarrow \R$ by
\begin{equation*}
   \bar{f}(x) = \sum_{\vec{i} \in R_{t,\delta}} f(\vec{i}) \ind_{E_{\vec{i}}}(x).
\end{equation*}

We then have that 
\begin{align*}
\int_X \Phi \left( \bar{f} \right) \,\, dm &= \sum_{\vec{i} \in R_t} \int_X \Phi \left( \bar{f} \right) \ind_{E_{\vec{i}}} \,\, dm +\int_X \Phi \left( \bar{f} \right) \ind_{E_r} \,\, dm \\
&= \sum_{\vec{i} \in R_t} \int_X \Phi \left( f(\vec{i}) \right) \ind_{E_{\vec{i}}} \,\, dm\\
&=  \sum_{\vec{i} \in R_t}  \Phi \left( f(\vec{i}) \right) mE_{\vec{i}}\\
&< \frac{1 - \varepsilon}{\# R_t} \sum_{\vec{i} \in R_t}  \Phi \left( f(\vec{i}) \right)\\
&< D( \Phi \left( f\right) \leq 1.
\end{align*}

\Pa
So $\int_X \Phi \left( \bar{f} \right) \,\, dm \leq 1$ and $\bar{f} \in \Phi(L)$.

\Pa 
Now, if $x \in E_{\vec{i}}$, then $\bar{f} \left(T_{\vec{n}} x \right) = f (\vec{i} + \vec{n})$ and we have that
\begin{equation*}
   \max_{N \in \Lambda} \frac{1}{\#S_N} \sum_{\vec{n} \in S_N} \bar{f} \left( T_{\vec{n}} x \right) \geq K,
\end{equation*}
so long as 
\begin{equation*}
   \vec{i} \in \left\{ \vec{j} \in R_{t, \delta} : \max_{N \in \Lambda} \frac{1}{\# S_N} \sum_{\vec{n} \in S_N} f(\vec{j} + \vec{n}) \geq K \right\}.
\end{equation*}

\Pa
So we have that 
\begin{align*}
  &m\left\{ x \in E_{\vec{i}} :  \max_{N \in \Lambda}\frac{1}{\# S_N} \sum_{\vec{n} \in S_N}  \bar{f} \left(T_{\vec{n}} x \right) \geq K \right\}\\ 
 &\geq \# \left\{ \vec{j} \in R_{t, \delta} :  \max_{N \in \Lambda} \frac{1}{\# S_N} \sum_{\vec{n} \in S_N}  f(\vec{j} + \vec{n}) \geq K \right\} mE_{\vec{i}}.
\end{align*}

Taking the sum over all the $E_{\vec{i}}$, 
\begin{equation*}
   m\left\{ x \in X :  \max_{N \in \Lambda}\frac{1}{\# S_N} \sum_{\vec{n} \in S_N}  \bar{f} \left(T_{\vec{n}} x \right) \geq K \right\} \geq (1-2 \epsilon) (1- \varepsilon) > 1- 3 \epsilon.
\end{equation*}

Since $\Phi$ takes on arbitratily small values near $0$, and since $\Phi(0) =0$, for every $\bar{f}$ with $$\int_X \Phi \left( \bar{f} \right) \,\, dm \leq 1$$ and every positive integer $\alpha$, there is a number $M_\alpha$ so that
\begin{equation*}
   \int_X \Phi \left( \frac{|\bar{f}|}{M_\alpha} \right) \,\, dm \leq 2^{-\alpha}.
\end{equation*}

\Pa
Fix $M_\alpha$ and set $K = \alpha M_\alpha$, $\epsilon = \frac{1}{3\alpha}$, and $\bar{g}_\alpha = \frac{|\bar{f}|}{M_\alpha}$. We then have 
\begin{equation*}
   m\left\{x : \max_{N \in \Lambda} \frac{1}{\# S_N} \sum_{\vec{n} \in S_N}   \bar{g}_\alpha \left( T_{\vec{n}} x \right) \geq \alpha \right\} > 1 - \frac{1}{\alpha}.
\end{equation*}
If $\bar{g} = \sup_\alpha \bar{g}_\alpha$, then
\begin{align*}
   \int_X \Phi \left(  \bar{g} \right) \,\, dm &= \int_X \sup_\alpha \Phi \left(  \bar{g}_\alpha \right) \,\, dm\\
	&\leq \int_X \sum_{\alpha = 1}^\infty \Phi \left(  \bar{g}_\alpha \right)  \,\, dm \\
	&= \sum_{\alpha = 1}^\infty \int_X \Phi \left(  \bar{g}_\alpha \right)  \,\, dm \\
	&\leq 1
\end{align*}
So we have that $\bar{g} \in \Phi \left( L\right)$.

\Pa
Let 
\begin{equation*}
   A_\alpha = \left\{ x : \sup_N \frac{1}{\#S_N} \sum_{\vec{n} \in S_N } \bar{g}\left( T_{\vec{n}} x \right) \geq \alpha\right\},
\end{equation*}
and set 
\begin{equation*}
   A = \cap_{m=1}^\infty \cup_{\alpha = m}^\infty A_\alpha.
\end{equation*}

\Pa
We then have that $mA = 1$ and if $x \in A$, 
\begin{equation*}
   \sup_N \frac{1}{\#S_N} \sum \bar{g}\left( T_{\vec{n}} x\right) = \infty.
\end{equation*}

\end{proof}

\Pa
\begin{rem}
   It is interesting that the cubes which we use here are, in one sense, arbitrary: any F\o lner monotile in $\Z^d$ which is comparable to $B_N$ should serve.  Indeed, if we were to redefine what we mean by sparse and $\infty$-sweeping out, we would not even need them comparable to the balls- only that the monotile sequece is of, at most, polynomial growth.
\end{rem}

\section{Proofs of Theorems \ref{One} and \ref{Two}}
For the proofs of the theorems, we rely upon an adaptation of the perturbation method first developed in \cite{Wierdl1998}.

\Pa
\begin{proof}[Proof of Theorem \ref{One}:]
   Let $1 \leq q < \infty$, $D$ be sparse in $\Z^d$, and suppose $\left\{ D_N \right\}$ is a universally $L^p$-good sequence of sets for all $p>q$. Our goal is to construct a perturbation of $D$, $S$, that forces averages taken over the sequence $\left\{ S_N \right\}$ to be diverge a.e. for some function, $f \in L^q$. The poor averaging behavior of such a set will be caused by introducing elements that impart a regular structure; however, this structure will be tied only to sets over which the averages are taken, not the elements themselves. 

\Pa
   In order that we have enough room to add new elements, we require that $D$ is of zero density. We note that since the cube of side length $2N+1$ is both contained in, and grows at the same rate (up to a constant) as, the ball of radius $N$, we have that
\begin{equation*}
   \lim_{N \to 0} \frac{\# \left( D \cap B_N \right)}{\# B_N} \rightarrow 0
\end{equation*}
   if and only if
\begin{equation*}
   \lim_{N \to 0} \frac{\# \left( D \cap R_N \right)}{\# R_N} \rightarrow 0.
\end{equation*}

\Pa
   We now define the shell
\begin{equation*}
   I_n = \left\{ a \in \Z^d : |a_i| = n, \,\, 1 \leq i \leq d \right\}.
\end{equation*}
   In order to create our perturbation, we will add a specific number of elements from specially chosen shells, but we will not otherwise specify the elements.\\
\Pa
   Let 
\begin{equation*}
   A_u = \left\{ k : k = 2^u, 2^u+1, ..., 2^{u+1}-1 \right\}.
\end{equation*}
We'll use $A_u$ to provide the additional structure required in our perturbation. We will do this by adding elements from shells whose indicies are congruent to $k$ mod $2^u$ if $k \in A_u$. In order to force divergence for $L^q$ while at the same time preserving the convergence in the $L^p$ spaces for $p>q$, however, we will need to select intervals of indicies that have few elements of $D$, that are large enough to contain the number of elements we wish to add, that are of the right size to ensure that the resulting set $S$ is a perturbation of $D$ (with respect to the sequence $\left\{R_N \right\}$), and, finally, don't overlap. More specifically, if $[n_k, 2n_k)$ are our intervals, we want
\begin{align}
   &n_k > 2n_{k-1},\label{overlap}\\
   &u2^u\#D_{n_k} < n_k \mbox{ for } k \in A_u, \label{bigenough}\\
   &\# D_{2n_k} \leq 3 \#D_{n_k}, \mbox{ and} \label{sparse}\\
   &\sum_{i=1}^{k-1} \# D_{n_i}  < \left( \frac{u}{2^u} \right)^{1/q} \# D_{n_k}, \mbox{ for } k \in A_u. \label{perturbation}
\end{align}

\Pa
   Our first requirement makes sure that these intervals do not overlap. For our second, suppose $u$ is fixed. Then from this range of $n_k$ to $2n_k$ we would like to choose $\left( \frac{u}{2^u} \right)^{1/q} \#D_{n_K}$ elements from shells $I_j$ with $j \equiv k$ mod $2^u$ to add to $D$. If $n_k > \left( \frac{u}{2^u} \right)^{1/q} \#D_{n_K} \cdot 2^u$, we can guarantee that there are enough elements. The third requirement ensures that there are not too many elements of $D$ in these shells, and the fourth will help make $S$ a perturbation. We must now show that there is a sequence $\left\{ n_k \right\}$ which meets our requirements. 

\Pa 
   Here we rely on the sparseness of $D$. There is a sequence $\left\{ m_j \right\}$ with the properties
\begin{align*}
   &\lim_{j \to \infty} \frac{\# D_{m_j}}{ \# R_{m_j}} =0, \mbox{ and}\\
   &   \frac{\# D_{m_j}}{ \# R_{m_j}} \leq  \frac{\# D_{m}}{ \# R_{m}}, \mbox{ for } m \leq m_j.
\end{align*}
   We will construct the elements of $\{ n_k \}$ using elements this sequence, letting $n_k = \lfloor m_j/2 \rfloor$ for large enough $j$.

\Pa
   We can choose the elements of $\{ n_k \}$ to satisfy both condition (\ref{overlap}) and condition (\ref{perturbation}). For condition (\ref{bigenough}), we note that
\begin{equation*}
   \frac{\# D_{n_k}}{ \# R_{n_k}} = \frac{\# D_{\lfloor m_j/2 \rfloor}}{ \# R_{\lfloor m_j/2 \rfloor}} < 3^d \frac{\# D_{ m_j }}{ \# R_{ m_j}}.
\end{equation*}
   Thus the sequence $\# D_{n_k} / \# R_{n_k} \to 0$, allowing us to satisfy (\ref{bigenough}) by choosing $n_k$ large enough.

\Pa
   This leaves us with condition (\ref{perturbation}). Here again we take advantage of our definition of $\{n_k \}$ and the sparseness of $D$.
\begin{align*}
   \# D_{2n_k} &\leq \# D_{m_j} = \frac{\# D_{ m_j }}{ \# R_{ m_j}} \#R_{ m_j} \\
	       &\leq \frac{\# D_{\lfloor m_j/2 \rfloor}}{ \# R_{\lfloor m_j/2 \rfloor}} \#R_{ m_j} \\
               &< 3^d \# D_{\lfloor m_j/2 \rfloor} = 3^d \# D_{n_k}.
\end{align*}

\Pa 
\begin{rem}RestarJuly5
   It is interesting to note here that both of these last two statements were consequences of the polynomial growth of $\Z^d$: the first directly, the second through our use of the fact that the volume of cubes of different diameters has a constant ratio.
\end{rem}

\Pa 
Having shown that we have room enough to do so, the perturbed set $S$ is constructed in the following manner. Consider the family of shells
\begin{equation*}
   \mathcal{I}_{n_k} = \left\{ I_m : n_k \leq m \leq 2n_k, m \equiv k \mbox{ mod} 2^u \mbox{, if } k \in A_u \right\}.
\end{equation*}
From this family, we choose, \textit{as we please}, $\left( u/ 2^u \right)^{1/q} \# D_{n_k}$ elements; we call the set consisting of these elements $E_k$. We then have
\begin{equation*}
   S= D \cup \left( \cup_{k}^\infty E_k\right).
\end{equation*}
Since $S$ is formed by adding elements to $D$, in order to show that $S$ is a perturbation we need only show that
\begin{equation*}
   \frac{\# \left(S_N \setminus D_N\right)}{\# D_N} \to 0.
\end{equation*}

\Pa
Suppose that $n_k \leq N < n_{k+1}$. Then
\begin{align*}
   \# \left(S_N \setminus D_N\right) &< \#E_k + \sum_{i=1}^{k-1} \# D_{n_i} \\
			       &\leq 2 \left( \frac{u}{2^u} \right)^{1/q} \#D_{n_k} \\
			       &\leq 2 \left( \frac{u}{2^u} \right)^{1/q} \#D_N.
\end{align*}
So, since as $k$ grows, so must $u$, we have 
 \begin{equation*}
    \lim_{N \to \infty} \frac{\# \left(S_N \setminus D_N \right)}{\# D_N} = \lim_{u \to \infty} 2 \left( \frac{u}{2^u} \right)^{1/q} =0.
 \end{equation*}

\Pa
We now wish to show that $\left\{ S_N \right\}$ is universally good for $L^p$, $p >q$. Fix $p$ and a measure-preserving $\Z^d$-action, $\left(X, \B, m, T \right)$. Since 
\begin{align*}
   \frac{1}{\# S_N} \sum_{g \in S_N} f\left(T_g x\right) &=  \frac{1}{\# D_N} \sum_{g \in S_N} f\left(T_g x\right)\\
							 &=  \frac{1}{\# D_N} \sum_{g \in D_N} f\left(T_g x\right) +  \frac{1}{\# D_N} \sum_{g \in S_N \setminus D_N} f\left(T_g x\right), 
\end{align*}
 and the first average converges, we need only show that 
\begin{equation}\label{limsup}
   \limsup \frac{1}{\# D_N} \sum_{g \in S_N \setminus D_N} f\left(T_g x\right) =0.
\end{equation}

Since, for $n_k \leq N < 2n_k$, we have 
\begin{equation*}
    \frac{1}{\# D_N} \sum_{g \in S_N \setminus D_N} f\left(T_g x\right) \leq  \frac{1}{\# D_{n_k}} \sum_{g \in S_{2n_k} \setminus D_{2n_k}} f\left(T_g x\right), 
\end{equation*}
we limit our attention to this second average; (\ref{limsup}) will follow if we can show
\begin{equation*}
   \int_X \sum_{k=1}^\infty \left( \frac{1}{\# D_{n_k}} \sum_{g \in S_{2n_k} \setminus D_{2n_k}} f\left(T_g x\right) \right)^p dm < \infty.
\end{equation*}

\Pa
Applying Tonelli and the triangle inequality, we have
\begin{align*}
    \int_X \sum_{k=1}^\infty \left( \frac{1}{\# D_{n_k}} \sum_{g \in S_{2n_k} \setminus D_{2n_k}} f\left(T_g x\right) \right)^p dm &\leq  \sum_{k=1}^\infty  \left\| \frac{1}{\# D_{n_k}} \sum_{g \in S_{2n_k} \setminus D_{2n_k}} f\left(T_g x\right) \right\|_{L^p}^p\\
	&\leq \left\| f \right\|_{L^p}^p \sum_{k=1}^\infty \left( \frac{\# \left( S_{2n_k} \setminus D_{2n_k} \right)}{\# D_{n_k}} \right)^p\\
        &= \left\| f \right\|_{L^p}^p \sum_{u=1}^\infty \sum_{k \in A_u} \left( \frac{\# \left( S_{2n_k} \setminus D_{2n_k} \right)}{\# D_{n_k}} \right)^p\\
        &= \left\| f \right\|_{L^p}^p \sum_{u=1}^\infty 2^u \left( \frac{1}{\# D_{n_k}} \cdot 3^d \# D_{n_k} \left( \frac{u}{2^u} \right)^{1/q}  \right)^p\\
	&= \left\| f \right\|_{L^p}^p \sum_{u=1}^\infty  3^{dp}  \frac{u^{p/q}}{2^{u(p/q-1)}} \\
	&= C_p \left\| f \right\|_{L^p}^p < \infty.
\end{align*}

\Pa
All that remains is to show that our perturbation does not allow for convergence for $f \in L^q$. To do this, we make use of Lemma \ref{Rohlin}. Our goal is then to find a reasonably well-behaved function $f: \Z^d \to \R$ for which 
\begin{equation*}
   \# \left\{ j : \max_{N \in \Lambda} \frac{1}{\# S_N} \sum_{g \in S_N} \left| f(j+g) \right| > K \right\} \geq (1- \varepsilon) \# R_N, \mbox{ i.o.}.
\end{equation*}
We'll meet this condition if we can show 
\begin{equation*}
   \left\{ j : \max_{k \in A_u} \frac{1}{\# S_{2n_k}} \sum_{g \in S_{2n_k}} \left| f(j+g) \right| > K(u) \right\} = \Z^d,
\end{equation*}
where $K$ is now an unbounded increasing function in $u$. 

\Pa 
Let 
\begin{equation*}
   E_{k,j} = \left\{ n \in E_k : n_j \equiv k \mbox{ mod } 2^u \right\}.
\end{equation*}
Then there is a $j$, $1 \leq j \leq d$, with 
\begin{equation*}
   \#E_{k,j} > \frac{1}{d} \left( \frac{u}{2^u} \right)^{1/q} \#D_{n_k}.
\end{equation*}
For each $k \in A_u$, we have a $j_k$; however, since there are $2^u$ such $k$, and only $d$ $j$'s, at least $\frac{2^u}{d}$ $k$'s must share a single $j$.  That is, there is a set $H \subset A_u$, $\#H \geq 2^u/d$, and a $j$, $1 \leq j \leq d$, so that 
\begin{equation*}
      \#E_{h,j} > \frac{1}{d} \left( \frac{u}{2^u} \right)^{1/q} \#D_{n_h}
\end{equation*}
for any $h \in H$. 

\Pa
Now, consider the set 
\begin{equation*}
   R_L(H, j) = \left\{ x \in R_L : x_j \equiv -h \mbox{ mod } 2^u, \mbox{ for some } h \in H \right\}.
\end{equation*}
This would be the set on which we'd like to build our divergence, using a suitable function $f$. But this set only covers $1/d$ of $R/L$. There are, however, injective maps $\pi_i: \Z/2^u \to \Z/2^u, 0 \leq i < d$, with $\pi_0$ denoting the identity map, and sets
\begin{equation*}
 R_{L,i} = \left\{ x \in R_L : \pi_i (x_j + h) = 0, \mbox{ for some } h \in H \right\}
\end{equation*}
so that 
\begin{equation*}
 R_L = \cup_{0 \leq i < d} R_{L, i}.
\end{equation*}

\Pa
Define $\phi$ by
\begin{equation*}
   \phi(x)= \left\{ 
\begin{array}{l l}
2^{u/q} & \quad \mbox{if $2^u | x_i$ for } 1 \leq i \leq d,\\
      0 & \quad \mbox{otherwise.}\\
\end{array} \right.
\end{equation*}
We then have that 
\begin{align*}
   D \left( |\phi|^q \right) &= \limsup_{L \to \infty} \frac{1}{\left( 2L+1 \right)^d} \sum_{x \in R_L} \left|f(x) \right|^q \\
                        &= \limsup_{L \to \infty} \frac{2^u}{\left( 2L+1 \right)^d} \# \left\{ x \in R_L: 2^u|x_i, 1\leq i \leq d \right\} \leq 1.
\end{align*}

\Pa
Now define $\phi_i$ by
\begin{equation*}
 \phi_i (x) = \phi\left(\pi_i(x)\right) \ind_{\left\{supp \left(\sum_{0 \leq k < i} \phi_k \right) \right\}^c}(x),
\end{equation*}
and $f$ by
\begin{equation*}
 f(x) = \frac{1}{d} \sum_{0 \leq i <d} \phi_i (x).
\end{equation*}
Since $D \left( |\phi|^q \right) \leq 1$, we likewise have
\begin{equation*}
 D \left( |f|^q \right) \leq 1,
\end{equation*}
and, for any $x \in \cup_{0 \leq i < d} R_{L,i}$ and $h \in H$, we have 
\begin{equation*}
  f(x+h) = \frac{ 2^{u/q}}{d}
\end{equation*}
Thus, for any $x \in \cup_{0 \leq i < d} R_{L,i}$ and $k \in H$,
\begin{align*}
    \frac{1}{\# S_{2n_k}} \sum_{g \in S_{2n_k}} \left| f(j+g) \right| &\geq  \frac{1}{3^d \#D_{n_k}} \sum_{g \in E_k} \left| f(j+g) \right| \\
     &\geq \frac{1}{3^d \#D_{n_k}} \sum_{g \in E_{k,j}} \frac{2^{u/q}}{d} \\
     &= 2^{u/q} \frac{1}{3^d \#D_{n_k}} \left( \frac{u}{2^u} \right)^{1/q} \frac{\#D_{n_k}}{d^2} \\
     &= \frac{u^{1/q}}{d^2 3^d}.
\end{align*}
So we have
\begin{equation*}
 \cup_{0 \leq i < d} R_{L,i} \subseteq \left\{ x \in R_L: \max_{k \in A_u} \frac{1}{\# S_{2n_k}} \sum_{g \in S_{2n_k}} \left| f(x+g) \right| > \frac{u^{1/q}}{d3^d} \right\},
\end{equation*}
and
\begin{equation*}
   \# \left\{ x \in R_L: \max_{k \in A_u} \frac{1}{\# S_{2n_k}} \sum_{g \in S_{2n_k}} \left| f(x+g) \right| > \frac{u^{1/q}}{d^2 3^d} \right\} = \# R_L.
\end{equation*}

\end{proof}

\begin{proof}[Proof of Theorem \ref{Two}:]
The proof of our second theorem proceeds in much the same vein as the first. Fix $q$ and let $1 \leq p<q$. Having defined the set $A_u$ precisely as before, and having established the existence of a suitable sequence $\left\{ n_k \right\}$ (which may also be identical to our previous selection), we select from each $\mathcal{I}_{n_k}$
\begin{equation*}
   \left( \frac{1}{u^2 2^u} \right)^{1/q} \#D_{n_k}.
\end{equation*}
Adding these to our sequence of sets $\left\{D_N \right\}$, we form the perturbation $\left\{S_N \right\}$.

\Pa
We now wish to show that the petrubation $\left\{S_N \right\}$ remains $L^q$-good. Fix a measure-preserving $\Z^d$-action, $\left(X, \B, m, T \right)$. Since 

\begin{align*}
   \frac{1}{\# S_N} \sum_{g \in S_N} f\left(T_g x\right) &=  \frac{1}{\# D_N} \sum_{g \in S_N} f\left(T_g x\right)\\
							 &=  \frac{1}{\# D_N} \sum_{g \in D_N} f\left(T_g x\right) +  \frac{1}{\# D_N} \sum_{g \in S_N \setminus D_N} f\left(T_g x\right), 
\end{align*}
 and the first average converges, we need only show that 
\begin{equation}\label{limsup}
   \limsup \frac{1}{\# D_N} \sum_{g \in S_N \setminus D_N} f\left(T_g x\right) =0.
\end{equation}

Since, for $n_k \leq N < 2n_k$, we have 
\begin{equation*}
    \frac{1}{\# D_N} \sum_{g \in S_N \setminus D_N} f\left(T_g x\right) \leq  \frac{1}{\# D_{n_k}} \sum_{g \in S_{2n_k} \setminus D_{2n_k}} f\left(T_g x\right), 
\end{equation*}
we limit our attention to this second average; as in the first proof, convergence will follow if we can show
\begin{equation*}
   \int_X \sum_{k=1}^\infty \left( \frac{1}{\# D_{n_k}} \sum_{g \in S_{2n_k} \setminus D_{2n_k}} f\left(T_g x\right) \right)^q dm < \infty.
\end{equation*}

\Pa
Applying Tonelli and the triangle inequality, we have
\begin{align*}
    \int_X \sum_{k=1}^\infty \left( \frac{1}{\# D_{n_k}} \sum_{g \in S_{2n_k} \setminus D_{2n_k}} f\left(T_g x\right) \right)^q dm 
	&\leq  \sum_{k=1}^\infty  \left\| \frac{1}{\# D_{n_k}} \sum_{g \in S_{2n_k} \setminus D_{2n_k}} f\left(T_g x\right) \right\|_{L^q}^q\\
	&\leq \left\| f \right\|_{L^q}^q \sum_{k=1}^\infty \left( \frac{\# \left( S_{2n_k} \setminus D_{2n_k} \right)}{\# D_{n_k}} \right)^q\\
        &= \left\| f \right\|_{L^q}^q \sum_{u=1}^\infty \sum_{k \in A_u} \left( \frac{\# \left( S_{2n_k} \setminus D_{2n_k} \right)}{\# D_{n_k}} \right)^p\\
        &= \left\| f \right\|_{L^q}^q \sum_{u=1}^\infty 2^u \left( \frac{1}{\# D_{n_k}} \cdot 3^d \# D_{n_k} \left( \frac{1}{u^2 2^u} \right)^{1/q}  \right)^q\\
	&= \left\| f \right\|_{L^p}^p \sum_{u=1}^\infty  3^{dq}  \frac{1}{u^2} \\
	&= C_q \left\| f \right\|_{L^q}^q < \infty.
\end{align*}

\Pa
We now wish to show that our added elements are enough to cause divergence for $p<q$. Once again, parallel to our earlier argument, we make use of Lemma \ref{Rohlin}. 

\Pa
Define $f$ by
\begin{equation*}
   f(x)= \left\{ 
\begin{array}{l l}
2^{u/p} & \quad \mbox{if $2^u | x_i$ for } 1 \leq i \leq d,\\
      0 & \quad \mbox{otherwise.}\\
\end{array} \right.
\end{equation*}
We then have that 
\begin{align*}
   D \left( |f|^p \right) &= \limsup_{L \to \infty} \frac{1}{\left( 2L+1 \right)^d} \sum_{x \in R_L} \left|f(x) \right|^p \\
                        &= \limsup_{L \to \infty} \frac{2^u}{\left( 2L+1 \right)^d} \# \left\{ x \in R_L: 2^u|x_i, 1\leq i \leq d \right\} \leq 1.
\end{align*}

\Pa 
Let 
\begin{equation*}
   E_{k,j} = \left\{ n \in E_k : n_j \equiv k \mbox{ mod } 2^u \right\}.
\end{equation*}
Then there is a $j$, $1 \leq j \leq d$, with 
\begin{equation*}
   \#E_{k,j} > \frac{1}{d} \left( \frac{1}{u^2 2^u} \right)^{1/q} \#D_{n_k}.
\end{equation*}
For each $k \in A_u$, we have a $j_k$; however, since there are $2^u$ such $k$, and only $d$ $j$'s, at least $\frac{2^u}{d}$ $k$'s must share a single $j$.  That is, there is a set $H \subset A_u$, $\#H \geq 2^u/d$, and a $j$, $1 \leq j \leq d$, so that 
\begin{equation*}
      \#E_{h,j} > \frac{1}{d} \left( \frac{1}{u^2 2^u} \right)^{1/q} \#D_{n_h}
\end{equation*}
for any $h \in H$. 

\Pa
Constructing $R_{L,i}$ and $f$ as before, we have for any $x \in \cup_{0 \leq i <d} R_{L,i}$ and $k \in H$,
\begin{align*}
    \frac{1}{\# S_{2n_k}} \sum_{g \in S_{2n_k}} \left| f(j+g) \right| &\geq  \frac{1}{3^d \#D_{n_k}} \sum_{g \in E_k} \left| f(j+g) \right| \\
     &\geq \frac{1}{d 3^d \#D_{n_k}} \sum_{g \in E_{k,j}} 2^{u/q} \\
     &= 2^{u/p} \frac{1}{d 3^d \#D_{n_k}} \left( \frac{1}{u^2 2^u} \right)^{1/q} \frac{\#D_{n_k}}{d} \\
     &= \frac{2^{\gamma u}}{u^{2/q}d^2 3^d}
\end{align*}
where $\gamma = \frac{q-p}{pq} > 0$. So 
\begin{equation*}
  \cup_{0 \leq i <d} R_{L,i} \subseteq \left\{ x \in R_L: \max_{k \in A_u} \frac{1}{\# S_{2n_k}} \sum_{g \in S_{2n_k}} \left| f(x+g) \right| > \frac{2^{\gamma u}}{u^{2/q}d3^d} \right\},
\end{equation*}
and, since $\# R_L(H,j) \geq \#R_L /d$,
\begin{equation*}
   \# \left\{ x \in R_L: \max_{k \in A_u} \frac{1}{\# S_{2n_k}} \sum_{g \in S_{2n_k}} \left| f(x+g) \right| > \frac{2^{\gamma u}}{u^{2/q}d^2 3^d} \right\} = \# R_L.
\end{equation*}

\end{proof}

\section{Concluding Remarks}
For Corollary \ref{Balls} we need only note that $S(N)$ is a perturbation of $D(N)$. The proof follows from conditions (\ref{comparable}) and (\ref{growth}) and the fact that the ratio of the measures of elements of a perturbed sequence and corresponding elements of its parent must go to 1. The need for both conditions in the corollary, however, relates to one of the central problems in extending this result to a more general group setting. Without the conditions, we need not have a relationship between  $\# S(N)$ and $\# S_N$. However, if our set $D$ is, in some sense, evenly distributed (for example, the randomly generated sets of \cite{LPR}), we can count on the sets $S_N$ to not have too much weight in the corners.  

\Pa 
In addition to the problem of insuring a relationship between different ways of averaging, there are several obstacles to proving a lemma analagous to \ref{Rohlin} in the more general group setting. The particular problem is that in the more general case, we do not have a F\o lner monotile that is comparable to the nested sequence of balls by which density is defined. In $\Z^d$, any nested sequence of rectangles fills this role. While the existence of a sequence of monotiles for any solvable group is shown in \cite{Weiss2001}, establishing the existence of such a sequence comparable in measure to the balls remains. Completing this step would likely lead to a comparable perturbation result for virtually nilpotent groups; however, this approach is unlikely to work for groups with superpolynomial rates of growth.

\bibliography{PerturbationSparseErgodicAverages.bib}
\bibliographystyle{plain}

\end{document}